\newtheorem{theorem}{Theorem}[section]
\newtheorem{corollary}[theorem]{Corollary}
\newtheorem{lemma}[theorem]{Lemma}
\theoremstyle{definition}
\theoremstyle{remark} \theoremstyle{remark}
\numberwithin{equation}{section}
\newcommand{\C}{\mathbb{C}}
\newcounter{rom}
\renewcommand{\therom}{(\roman{rom})}
\newenvironment{romanlist}{\begin{list}{\therom}
                {\setlength{\leftmargin}{2em}\usecounter{rom}}}%
{\end{list}}
\title{Null pseudo-isotropic Lagrangian surfaces}
\begin{document}
\author[A. Carriazo]{Alfonso Carriazo}
 \address{Departamento de Geometr\'{i}a y Topolog\'{i}a, Facultad de Matem\'aticas, c/ Tarfia s/n, Universidad de Sevilla, 41012, Sevilla, Spain}
 \email{carriazo@us.es}

\author[V. Mart\'{i}n-Molina]{Ver\'onica  Mart\'{i}n-Molina}
 \address{Departamento de Did\'{a}ctica de las Matem\'{a}ticas, Facultad de Ciencias de la Educaci\'on, c/ Pirotecnia s/n, Universidad de Sevilla, 41013, Sevilla, Spain}
 \email{veronicamartin@us.es}

\author[L. Vrancken]{Luc Vrancken}
 \address{LAMAV, Universit\'{e} de Valenciennes, 59313, Valenciennes Cedex 9, France \and Departement Wiskunde, KU Leuven, Celestijnenlaan 200 B, 3001, Leuven, Belgium}
 \email{luc.vrancken@univ-valenciennes.fr}

\thanks{The first two authors are partially supported by the MINECO-FEDER grant MTM2014-52197-P. They also belong to the PAIDI groups FQM-327 and FQM-226 (Junta de Andaluc\'ia, Spain), respectively.}

\date{}
\begin{abstract}\noindent
In this paper we will show that a Lagrangian, Lorentzian surface $M^2_1$ in a complex pseudo space form $\widetilde M^2_1 (4c)$
is pseudo-isotropic if and only if $M$ is minimal.
Next we will obtain a complete classification of all Lagrangian, Lorentzian surfaces which are lightlike pseudo-isotropic but not pseudo-isotropic.
\end{abstract}

\maketitle

\sloppy

{{\bfseries Key words}:  {\em Lagrangian submanifold, complex
projective space, isotropic submanifold, Lorentzian submanifold}.

{\bfseries Subject class: } 53B25, 53B20.}

\section{Introduction}
The notion of isotropic submanifold was first introduced in \cite{ON2}
by O'Neill for immersions of Riemannian manifolds and recently extended by Cabrerizo, Fern\'andez and G\'omez in \cite{CFG} to the pseudo-Riemannian case. A submanifold is called pseudo-isotropic if, for any point $p$ and any tangent vector $v$ at a point $p$, we have that
\begin{equation} \label{1.1}
\left<h(v,v),h(v,v)\right>= \widetilde\lambda(p) \left<v,v\right>^2,
\end{equation}
where $h$ denotes the second fundamental form of the immersion and $\widetilde\lambda$ is a smooth function on the submanifold.

Note that since the induced metric is pseudo-Riemannian it is natural to distinguish between timelike, spacelike and lightlike (or null) vectors. This leads in a natural way to the notions of
\begin{romanlist}
\item Timelike pseudo-isotropic
 if, for any point $p$ and any timelike tangent vector $v$ at a point $p$, equation \eqref{1.1} is satisfied,
\item Spacelike pseudo-isotropic
 if, for any point $p$ and any spacelike tangent vector $v$ at a point $p$, equation \eqref{1.1} is satisfied,
\item Lightlike isotropic if, for every lightlike vector $v$
at the point $p$, we have that $h(v,v)$ is again a lightlike vector.
\end{romanlist}

It was shown in \cite{CFG} that the notions of pseudo-isotropic, timelike pseudo-isotropic and spacelike pseudo-isotropic are equivalent. In the same paper they also included an example of an immersion which is lightlike pseudo-isotropic but not  pseudo-isotropic.

Here we are particularly interested in Lagrangian immersions of complex space forms. In the positive definite case, isotropic Lagrangian immersions have been studied in \cite{C1}, \cite{C2}, \cite{LW}, \cite{MU} and \cite{V}.
In this paper we will consider pseudo-isotropic and lightlike pseudo-isotropic Lagrangian, Lorentzian surfaces $M^2_1$ in a complex pseudo space form $\widetilde M^2_1 (4c)$. We will assume that the space form is not definite and hence has real signature 2. By changing the sign of the metric if necessary, it is sufficient to deal with the cases $c=0$ or $c=1$.

We will first show in Section \ref{sec-3}  that
\begin{theorem} \label{th1.1}
Let $M$ be a Lagrangian, Lorentzian surface in a complex space form. Then $M$ is pseudo-isotropic if and only if $M$ is minimal.
\end{theorem}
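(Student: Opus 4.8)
The plan is to exploit the algebraic structure that the Lagrangian condition imposes on the second fundamental form and to reduce the whole statement to a pointwise identity between homogeneous polynomials in two variables; as we will see, the ambient curvature plays no role at all.

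First I would recall the standard fact that for a Lagrangian immersion into a (pseudo-)K\"ahler manifold the cubic form $C(X,Y,Z)=\langle h(X,Y),JZ\rangle$ is totally symmetric, and that $J$ identifies $TM$ isometrically with the normal bundle; in particular $\{JE_1,JE_2\}$ is a frame of the normal bundle whenever $\{E_1,E_2\}$ is a frame of $TM$. Since $M$ is Lorentzian I would work, near an arbitrary point, in a local null frame $\{E_1,E_2\}$ normalised by $\langle E_1,E_1\rangle=\langle E_2,E_2\rangle=0$ and $\langle E_1,E_2\rangle=-1$, so that $C$ is encoded in the four numbers $C_{ijk}=\langle h(E_i,E_j),JE_k\rangle$. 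Using $\langle JE_i,JE_j\rangle=\langle E_i,E_j\rangle$ one solves $h(E_i,E_j)=-C_{ij2}\,JE_1-C_{ij1}\,JE_2$ and, contracting with the (self-inverse) metric, obtains the mean curvature vector $H=2C_{122}\,JE_1+2C_{112}\,JE_2$. Hence $M$ is minimal if and only if $C_{112}=C_{122}=0$.

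Next, for an arbitrary tangent vector $v=\alpha E_1+\beta E_2$ I would compute $h(v,v)=-P\,JE_1-Q\,JE_2$, where $P=C_{112}\alpha^2+2C_{122}\alpha\beta+C_{222}\beta^2$ and $Q=C_{111}\alpha^2+2C_{112}\alpha\beta+C_{122}\beta^2$, so that $\langle h(v,v),h(v,v)\rangle=-2PQ$ while $\langle v,v\rangle^2=4\alpha^2\beta^2$. Since being pseudo-isotropic means that \eqref{1.1} holds for \emph{every} tangent vector, it is equivalent to the polynomial identity $PQ=-2\widetilde\lambda\,\alpha^2\beta^2$ in $\alpha,\beta$. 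Comparing the coefficients of $\alpha^4$, $\alpha^3\beta$ and $\alpha\beta^3$ on both sides gives $C_{111}C_{112}=0$, $C_{112}^2+C_{111}C_{122}=0$ and $C_{122}^2+C_{112}C_{222}=0$; these force $C_{112}=0$ (otherwise the first yields $C_{111}=0$ and then the second yields $C_{112}=0$), and then the third yields $C_{122}=0$. By the previous paragraph this is precisely minimality. Conversely, if $M$ is minimal then $P=C_{222}\beta^2$ and $Q=C_{111}\alpha^2$, so $PQ=C_{111}C_{222}\alpha^2\beta^2$ and \eqref{1.1} holds with $\widetilde\lambda=-\tfrac12 C_{111}C_{222}$.

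The only real bookkeeping point is to check that this last $\widetilde\lambda$ is a globally well-defined smooth function: the two null line fields on $M$ are intrinsic, and an admissible change of null frame is, up to swapping $E_1$ and $E_2$, of the form $E_1\mapsto\mu E_1$, $E_2\mapsto\mu^{-1}E_2$, under which $C_{111}C_{222}$ is invariant, so the local expressions patch together. I do not expect a genuine obstacle here — the argument is entirely pointwise and algebraic and uses neither the Gauss nor the Codazzi equation; the main thing to get right is the sign and normalisation conventions in the null frame and the resulting expressions for $h$ and $H$.
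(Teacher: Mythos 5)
Your argument is correct, and it reaches the same algebraic core as the paper by a slightly different, self-contained route. The paper's converse direction invokes the polarization lemma of Cabrerizo--Fern\'andez--G\'omez (Lemma \ref{isotropic}) to extract four orthogonality conditions ($h(e_1,e_1)$ and $h(e_2,e_2)$ lightlike, $h(e_1,e_2)$ orthogonal to both) and then derives a contradiction with non-minimality; you instead expand $\left<h(v,v),h(v,v)\right>$ as a quartic in the null-frame components of $v$ and compare coefficients directly, which reproves exactly the consequences of that lemma that are needed here and yields $C_{112}=C_{122}=0$ constructively rather than by contradiction. What your version buys is independence from the external lemma (and it makes transparent that neither Gauss nor Codazzi is used); what the paper's version buys is brevity, since the lemma packages the polarization once and for all. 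Your forward direction is the same computation as the paper's, up to the opposite sign convention $\left<E_1,E_2\right>=-1$ versus the paper's $+1$, which you use consistently; the closing remark on the invariance of $C_{111}C_{222}$ under admissible changes of null frame is a small point the paper leaves implicit (there $\widetilde\lambda$ is simply read off as $\tfrac12\lambda\mu$), and it is handled correctly.
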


Next we will obtain a complete classification of all Lagrangian, Lorentzian surfaces which are lightlike pseudo-isotropic but not pseudo-isotropic. To do so, we will first show in Section \ref{sec-4} that such a surface can be seen as the union of two surfaces that we will call of Type I and of Type II. These will be classified, case by case, in Sections \ref{sec-5} and \ref{sec-6}, respectively.

\section{Preliminaries}
Throughout this paper we will assume that $M$ is a Lagrangian, Lorentzian submanifold $M$ of a complex space form $\widetilde M$. We use the standard formulas of Gauss and Weingarten for a submanifold, introducing the
second fundamental form $h$ and the shape operators $A$ by
\begin{align*}
\widetilde \nabla_X Y &= \nabla_X Y + h(X,Y),\\
\widetilde \nabla_X \xi &=-A_{\xi} X + \nabla_X^\perp \xi,
\end{align*}
where $X$ and $Y$ are tangent vector fields and $\xi$ is normal.
Here, as usual, $\widetilde \nabla$ denotes the Levi-Civita connection on the ambient space and, if no confusion is possible, we will always identify $M$ with its image in $\widetilde M$.

Since $M$ is Lagrangian, we have that the complex structure $J$ interchanges the tangent and the normal spaces. Using the formulas of Gauss and Codazzi this implies that
\begin{align*}
\nabla_X^\perp JY &= J \nabla_X Y,\\
A_{JX} Y &= -J h(X,Y)=A_{JY} X.
\end{align*}
The latter formula implies that the cubic form $<h(X,Y), JZ>$ is totally symmetric in all components.

We denote the curvature tensors of $\nabla$ and $\nabla^{\perp}$ by
$R$ and $R^{\perp}$, respectively. The first covariant
derivative of $h$ is defined by
\begin{equation}
\begin{aligned}
(\nabla h)(X,Y,Z)=\nabla^{\perp}_X h(Y,Z)-h(\nabla_X
Y,Z)-h(\nabla_X Z,Y),\label{2.1}
\end{aligned}
\end{equation}
where $X$, $Y$, $Z$ and $W$ are tangent vector fields.

The equations of Gauss, Codazzi and Ricci for a Lagrangian
submanifold of $\widetilde{M}^n(4c)$ are given by
\begin{align}
\langle R(X,Y)Z,W \rangle &= \langle h(Y,Z),h(X,W) \rangle - \langle h(X,Z),h(Y,W) \rangle\label{2.2} \\
&+c(\langle X,W \rangle  \langle Y,Z \rangle - \langle X,Z \rangle  \langle Y,W \rangle ),\nonumber\\
(\nabla h)(X,Y,Z)&=(\nabla h)(Y,X,Z),\label{2.3}\\
\langle R^{\perp}(X,Y)JZ,JW \rangle &= \langle [A_{JZ},A_{JW}]X,Y \rangle\label{2.4} \\
&+c( \langle X,W \rangle  \langle Y,Z \rangle - \langle X,Z \rangle  \langle Y,W \rangle ),\nonumber
\end{align}
where $X$, $Y$ $Z$ and $W$ are tangent vector fields. Note that for
a Lagrangian submanifold the equations of Gauss and Ricci are
mutually equivalent.

We refer to \cite{baro82} for the construction of the standard models of indefinite complex space forms $\mathbb CP^n_s(4c)$ when $c >0$,  $\mathbb CH^n_s(4 c)$ when $c <0$ and $\mathbb C^{n}_s$. For our purposes, it is sufficient to know that there exist pseudo-Riemannian submersions, called Hopf fibrations, given by
	$$
	\Pi: S^{2n+1}_{2s}(c) \rightarrow \C P^n_s(4 c): z \mapsto z\cdot
	\C^\star
$$
if $c >0$, and by
$$
	\Pi: H^{2n+1}_{2s+1}(c) \rightarrow \C H^n_s(4 c): z \mapsto
	z\cdot \C^\star,
$$
if $c <0$, where
\begin{align*}
S^{2n+1}_{2s}(c)
	&= \{z \in \C^{n+1}\vert b_{s,n+1}(z,z) =\tfrac{1}{c}\},\\
H^{2n+1}_{s+1}(c) &= \{z \in \C^{n+1}\vert b_{s+1,n+1}(z,z)
	=\tfrac{1}{c}\}
\end{align*}
 and $b_{s,q}$ is the standard Hermitian form with index $s$ on $\mathbb C^q$. For our convenience, we will assume that we have chosen an orthonormal  basis such that the first $s$ odd terms appear with a minus sign.

In  \cite{baro82} it is shown
that
locally any indefinite complex space form is holomorphically
isometric to either $\C^n_s$, $\C P^n_s(4 c)$, or $\C
	H^n_s(4 c)$.
Remark that, by replacing the metric $<.,.>$ by $-<.,.>$, we have that $\mathbb CH^n_s(4c)$ is holomorphically anti-isometric to $\mathbb CP^n_{n-s}(-4 c)$. For that purpose, as in our case $n=2$ and $s=1$, we only need to consider $\C^2_1$ and  $\C P^2_1(4)$.

In order to study or explicitly obtain examples of Lagrangian submanifolds, it is usually more convenient to work with horizontal submanifolds. In that aspect, we first recall some basic facts from \cite{R} which relate Lagrangian submanifolds of $\mathbb C P^n_s(4c)$  to horizontal immersions in $S^{2n+1}_{2s}(c)$. Here, a horizontal immersion $f\colon M^n_s \rightarrow	S^{2n+1}_{2s}(c)$ is an immersion which satisfies $i f(p) \perp f_*(T_pM^n_s)$ for all $p\in M^n_s$, where $i =\sqrt{-1}$.

\begin{theorem}[\cite{R}]
	Let $\Pi: S^{2n+1}_{2s}(1) \rightarrow
	\mathbb C P^n_s(4)$ be the Hopf fibration.  If $f: M^n_s
	\rightarrow S^{2n+1}_{2s}(c)$ is a horizontal immersion, then
	$F = \Pi \circ f: M^n_s \rightarrow \C P^n_s(4c)$ is a
	Lagrangian immersion.

	Conversely, let $M^n_s$ be a simply connected manifold and let $F:
	M^n_s \rightarrow \C P^n_s(4)$ be a Lagrangian
	immersion. Then there exists a 1-parameter family of horizontal
	lifts $f: M^n_s \rightarrow S^{2n+1}_{2s}(1)$ such that $F = \Pi
	\circ f$. Any two such lifts $f_1$ and $f_2$ are related by $f_1 =
	e^{i\theta} f_2$, where $\theta$ is a constant.
\end{theorem}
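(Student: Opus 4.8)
The plan is to realise $\Pi$ as a principal $S^1$-bundle and treat the two implications separately. First I would record the standard facts about $\Pi\colon S^{2n+1}_{2s}(1)\to\C P^n_s(4)$: it is a pseudo-Riemannian submersion whose fibres are the orbits of the free action $z\mapsto\varexp^{\I t}z$; the vertical space at $z$ is $\R\cdot\I z$, which is non-degenerate since $\la\I z,\I z\ra=\la z,z\ra=1$, so the horizontal space $\mathcal H_z=(\R\I z)^\perp\cap T_zS^{2n+1}_{2s}(1)$ is well defined and $\d\Pi_z|_{\mathcal H_z}$ is a linear isometry onto $T_{\Pi(z)}\C P^n_s(4)$. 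Setting $\alpha_z(X)=\la X,\I z\ra$, the $1$-form $\I\alpha$ is a principal connection for $\Pi$ whose curvature satisfies $\Pi^*\omega=\d\alpha$ up to a constant, $\omega$ being the K\"ahler form of $\C P^n_s(4)$; the general normalisation in the statement is obtained from the case $c=1$ by a homothety, so it is enough to argue there, and ``horizontal immersion'' means precisely $f^*\alpha=0$.

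For the forward implication I would observe that $f^*\alpha=0$ forces $f^*(\d\alpha)=\d(f^*\alpha)=0$, hence $F^*\omega=f^*\Pi^*\omega=0$ for $F=\Pi\circ f$. Since $\d f$ takes values in $\mathcal H$, on which $\d\Pi$ is injective, $F$ is again an immersion of the $n$-dimensional $M^n_s$ into the $2n$-real-dimensional $\C P^n_s(4)$; an $n$-dimensional immersion killing the K\"ahler form is Lagrangian. This direction is immediate once the bundle picture is in place.

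For the converse I would pull the bundle back along $F$: $F^*S^{2n+1}_{2s}(1)\to M^n_s$ is a principal $S^1$-bundle with the induced connection, and its curvature is (a constant times) $F^*\omega$, which vanishes exactly because $F$ is Lagrangian. A flat principal $S^1$-bundle over a simply connected base has trivial holonomy, hence admits a global parallel section $\sigma$; composing $\sigma$ with the canonical map $F^*S^{2n+1}_{2s}(1)\to S^{2n+1}_{2s}(1)$ produces $f\colon M^n_s\to S^{2n+1}_{2s}(1)$ with $\Pi\circ f=F$ and $f^*\alpha=0$, i.e.\ a horizontal lift. Two parallel sections of a flat $S^1$-bundle over a connected base differ by a fixed element of $S^1$, so any two horizontal lifts differ by a constant $\varexp^{\I\theta}$, which yields both the $1$-parameter family and the stated uniqueness. (Concretely: locally $F$ lifts to some $\hat f$, and looking for a horizontal lift $f=\varexp^{\I g}\hat f$ reduces to $\d g=-\hat f^*\alpha$, whose right-hand side is closed because $\d(\hat f^*\alpha)=\hat f^*\Pi^*\omega=F^*\omega=0$; simple connectedness then assembles the local solutions into a global $g$, unique up to an additive constant.)

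The step I expect to be the main obstacle is the converse, and inside it the passage from local to global: producing a genuinely globally defined horizontal lift rather than local pieces. This is exactly where simple connectedness of $M^n_s$ is used and where the Lagrangian hypothesis plays its essential role, namely making the pulled-back connection flat (equivalently, making $\hat f^*\alpha$ closed). The only other points needing attention are routine: that indefiniteness of the metrics causes no difficulty, because the fibre direction $\I z$ stays non-degenerate on $S^{2n+1}_{2s}(1)$, and keeping track of the homothety relating the normalisations $(1),(4)$ to $(c),(4c)$.
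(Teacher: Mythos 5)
Your argument is correct. Note, however, that the paper itself offers no proof of this statement: it is quoted verbatim as a theorem of Reckziegel and justified solely by the citation \cite{R}, so there is no internal proof to compare yours against. What you have written is the standard bundle-theoretic argument (and essentially the one in the cited reference): the forward direction from $F^*\omega=\mathrm{const}\cdot\, d(f^*\alpha)=0$ together with injectivity of $d\Pi$ on the horizontal distribution, and the converse from flatness of the pulled-back circle bundle plus simple connectedness, with the uniqueness up to a constant $e^{i\theta}$ following from horizontality of both lifts. You also correctly isolate the one point where the indefinite signature could cause trouble, namely that the fibre direction $iz$ is non-degenerate on $S^{2n+1}_{2s}(1)$, so the horizontal complement and the submersion structure are well defined; this is exactly what makes the Riemannian proof carry over.
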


Remark that both immersions have the same induced metric and that
the second fundamental forms of both immersions are also closely related. For more details, see \cite{R}.

\section{Minimality and Pseudo-Isotropy}\label{sec-3}
In this section we will prove Theorem \ref{th1.1}. Let us suppose that $M^2_1$ is a Lagrangian, Lorentzian surface of a complex space form $\widetilde M$. We will assume that either $\widetilde M = \mathbb C^2_1$ or $\mathbb CP^2_1(4)$.

Let $p \in M$. We say that $\{e_1,e_2\}$ is a null frame at a point $p$ if it satisfies
\begin{equation*}
<e_i,e_j>= (1-\delta_{ij}), \qquad i,j \in \{1,2\}.
\end{equation*}

In terms of a null frame, it is then clear that a Lagrangian immersion is minimal if and only if
$$h(e_1,e_2)= 0.$$
In view of the symmetries of the second fundamental form,
this implies that there exist numbers $\lambda$ and $\mu$ such that
\begin{align*}
&h(e_1,e_1)= \lambda Je_2,\\
&h(e_2,e_2)= \mu Je_1.
\end{align*}

If we now write $v= v_1 e_1+v_2 e_2$, it follows that
\begin{align*}
h(v,v)&=\mu v_2^2 J e_1+\lambda v_1^2 Je_2,\\
<h(v,v),h(v,v)>&=2 \lambda \mu v_1^2 v_2^2 =\tfrac 12 \lambda \mu <v,v>^2,
\end{align*}
which shows that a minimal surface is indeed pseudo-isotropic.

In order to show the converse, we will use
the following lemma of \cite{CFG}:
\begin{lemma}\label{isotropic}
Let $F:M \rightarrow \widetilde M$ be an isometric pseudo-Riemannian immersion. Then the immersion is (pseudo)-isotropic
if and only if for any tangent vectors $x,y,z,w \in T_p M$, we have that
\begin{multline*}
<h(x,y),h(z,w)>+<h(y,z),h(x,w)>+ <h(z,x),h(y,w)>= \\
=\widetilde\lambda(p) \{<x,y><z,w>+<y,z><x,w>+<z,x><y,w>\}.
\end{multline*}
\end{lemma}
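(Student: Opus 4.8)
The plan is to recognize the displayed equation as the complete polarization of the pointwise isotropy condition \eqref{1.1}, and then to invoke the elementary principle that a symmetric $\R$-multilinear form is recovered from its restriction to the diagonal. The ``if'' direction is trivial: putting $x=y=z=w=v$ in the displayed formula gives $3\,<h(v,v),h(v,v)>=3\,\widetilde\lambda(p)<v,v>^2$, which is \eqref{1.1}. So the work is all in the converse.

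Fix $p\in M$ and consider the two maps on $(T_pM)^4$
\[
S(x,y,z,w)=<h(x,y),h(z,w)>+<h(y,z),h(x,w)>+<h(z,x),h(y,w)>,
\]
\[
T(x,y,z,w)=<x,y><z,w>+<y,z><x,w>+<z,x><y,w>,
\]
both of which are $\R$-multilinear. The first step is to check that $S$ is totally symmetric: since $h$ is symmetric in its two arguments, the scalar $<h(\cdot,\cdot),h(\cdot,\cdot)>$ is unchanged under swapping the entries of either factor and under swapping the two factors, and a short bookkeeping then shows that the cyclic sum $S$ is invariant under the transpositions $x\leftrightarrow y$, $z\leftrightarrow w$, $(x,y)\leftrightarrow(z,w)$ and $x\leftrightarrow w$, which generate the symmetric group on the four slots. ($T$ is visibly totally symmetric.) Set $B:=S-\widetilde\lambda(p)\,T$, a totally symmetric $\R$-multilinear form. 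Because $S(v,v,v,v)=3<h(v,v),h(v,v)>$ and $T(v,v,v,v)=3<v,v>^2$, the pseudo-isotropy hypothesis is precisely the statement that $B(v,v,v,v)=0$ for every $v\in T_pM$.

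The second step is the polarization. Substituting $v+tu$ with $t\in\R$ into the diagonal and expanding by multilinearity and symmetry gives the polynomial identity $0=\sum_{k=0}^{4}\binom{4}{k}t^{k}\,B(\underbrace{u,\dots,u}_{k},\underbrace{v,\dots,v}_{4-k})$ in $t$; comparing the coefficient of $t$ yields $B(u,v,v,v)=0$ for all $u,v$. For fixed $u$ this makes $v\mapsto B(u,v,v,v)$ the diagonal of a symmetric trilinear form that vanishes identically, so the same substitution in the three $v$-slots frees another argument, and iterating once more gives $B\equiv 0$ on $(T_pM)^4$, which is exactly the asserted identity.

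The only place needing genuine care is the verification that $S$ is totally symmetric; without it the polarization would not reproduce the particular cyclic sum appearing on the left-hand side. Beyond that there is no obstacle: everything happens at a single point, so $\widetilde\lambda(p)$ is an honest real scalar and the characteristic-zero polarization argument applies verbatim, and the Lorentzian (as opposed to Riemannian) signature plays no role in the statement.
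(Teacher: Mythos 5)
Your proof is correct. One thing to be aware of: the paper does not actually prove this lemma --- it quotes it from the reference \cite{CFG} (where it is stated for immersions into pseudo-Euclidean spaces) and merely remarks that the statement carries over to arbitrary pseudo-Riemannian ambient spaces. Your argument is the standard polarization proof one would expect to find in that reference: the key observations --- that the cyclic sum $S$ runs over the three perfect matchings of the four slots and is therefore totally symmetric (any permutation of $\{x,y,z,w\}$ permutes the matchings), that $S$ and $T$ restrict on the diagonal to $3\langle h(v,v),h(v,v)\rangle$ and $3\langle v,v\rangle^2$, and that a totally symmetric multilinear form over $\mathbb{R}$ vanishing on the diagonal vanishes identically --- are all present and correctly executed, including the iterated freeing of one argument at a time via the coefficient of $t$ in $B(v+tu,\dots,v+tu)$. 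A side benefit of writing the proof out as you do is that it substantiates the paper's unproved remark: since the entire argument is pointwise linear algebra in $T_pM$ involving only the second fundamental form and the metric at $p$, nothing about the ambient space beyond its pseudo-Riemannian structure is used, so the extension from the pseudo-Euclidean to the general case is indeed immediate.
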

Note that in \cite{CFG} the above lemma was formulated only for immersions into pseudo-Euclidean spaces. However, it is clear that it remains valid for arbitrary immersions in pseudo-Riemannian spaces.

Let us assume now that $M$ is a pseudo-isotropic surface. Then it follows from the previous lemma that
\begin{romanlist}
\item $h(e_1,e_1)$ is a lightlike vector, by taking $x=y=z=w=e_1$,
\item $h(e_2,e_2)$ is a lightlike vector, by taking $x=y=z=w=e_2$,
\item $h(e_1,e_2)$ is orthogonal to $h(e_1,e_1)$, by taking $x=y=z=e_1$ and $w=e_2$,\label{(iii)}
\item $h(e_1,e_2)$ is orthogonal to $h(e_2,e_2)$, by taking $x=y=z=e_2$ and $w=e_1$.
\end{romanlist}
We now write
$$h(e_1,e_2)= v_1 Je_1 +v_2 Je_2.$$
The fact that the immersion is Lagrangian then implies that
\begin{align*}
&h(e_1,e_1)=v_2 Je_1 +v_3 Je_2,\\
&h(e_2,e_2)= v_4 Je_1 + v_1 Je_2.
\end{align*}
Let us now assume that $M$ is not minimal. Then, by interchanging $e_1$ and $e_2$ if necessary, we may assume that $v_2 \ne 0$. As $h(e_1,e_1)$ is lightlike by (i), this implies that $v_3=0$. It then follows from \ref{(iii)} that $v_2 = 0$ which is a contradiction.
This completes the proof of Theorem 1.

\section{Lightlike isotropic Lagrangian, Lorentzian surfaces} \label{sec-4}

In this section we will assume that $M^2_1$ is a Lagrangian, Lorentzian lightlike pseudo-isotropic surface of a
complex space form $\widetilde M$. We  will assume that either $\widetilde M = \mathbb C^2_1$ or $\mathbb CP^2_1(4)$. We will also assume that $M$ is not pseudo-isotropic, i.e. in view of the previous section we will assume that the immersion is not minimal.
We call a surface which is lightlike pseudo-isotropic  without minimal points a proper lightlike pseudo-isotropic surface.

We again take a null frame at a point $p$, i.e. a frame $\{e_1,e_2\}$ such that
\begin{equation*}
<e_i,e_j>= (1-\delta_{ij}), \qquad i,j \in \{1,2\}.
\end{equation*}
Note that if both $h(e_1,e_1)=h(e_2,e_2)=0$, then it follows from the fact that $M$ is Lagrangian that also $h(e_1,e_2)= 0$ and therefore that $M$ is pseudo-isotropic.

We say that $M$ is proper lightlike pseudo-isotropic of Type 1 at the point $p$ if there exists a lightlike vector $v$ such that $h(v,v)$ and $Jv$ are independent. This means,  after changing $e_1$ and $e_2$ if necessary, that we may assume that
\begin{align*}
&h(e_1,e_1)= Je_2,\\
&h(e_2,e_2)= \mu Je_1+\lambda J e_2.
\end{align*}
Since $M$ is Lagrangian, we deduce from this that
\begin{equation*}
h(e_1,e_2)= \lambda Je_1.
\end{equation*}
Since $M$ is proper, we have that $\lambda \ne 0$. Hence, since $h(e_2,e_2)$ is lightlike, we deduce that $\mu =0$ and so $h(e_2,e_2)=\lambda J e_2$.

We say that $M$ is proper lightlike pseudo-isotropic of Type 2 if, for every lightlike vector $v$, $h(v,v)$ and $Jv$ are dependent.  This means, after changing $e_1$ and $e_2$ if necessary, that we may assume that
\begin{align*}
&h(e_1,e_1)= Je_1,\\
&h(e_2,e_2)= \lambda J e_2.
\end{align*}
As $M$ is Lagrangian, we deduce from this that
\begin{equation*}
h(e_1,e_2)= \lambda Je_1+ Je_2.
\end{equation*}
We see that  $M$ is indeed not minimal and the immersion is therefore proper
lightlike pseudo-isotropic.

Note that a point which belongs to the closure of Type 2 points needs to be a minimal point automatically. Therefore, it follows from the fact that $M$ is proper that $M$  can be seen as the union of
a Type 1 lightlike proper pseudo-isotropic surface and a
Type 2 lightlike proper pseudo-isotropic surface. In both cases, it is immediately clear that the null frame can be extended to a neighborhood of the point $p$.

\section{Proper Lightlike isotropic Lagrangian, Lorentzian surfaces of Type 1}\label{sec-5}

We  take a null frame in a neighborhood of the  point $p$ as constructed in the previous section. So we have a frame $\{E_1,E_2\}$ such that
\begin{equation*}
<E_i,E_j>= (1-\delta_{ij}), \qquad i,j \in \{1,2\},
\end{equation*}
and
\begin{align*}
&h(E_1,E_1)= JE_2,\\
&h(E_2,E_2)= \lambda J E_2,\\
&h(E_1,E_2)=\lambda JE_1,
\end{align*}
where $\lambda$ is a nowhere vanishing function. We write
\begin{alignat*}{2}
&\nabla_{E_1} E_1 = \alpha E_1,&\qquad  &\nabla_{E_1} E_2 = -\alpha E_2,\\
&\nabla_{E_2} E_1 = -\beta E_1,&\qquad  &\nabla_{E_2} E_2 = \beta E_2,
\end{alignat*}
where $\alpha$ and $\beta$ are functions.

\begin{lemma} We have that $\beta=0$ and $\lambda$ satisfies the following system of differential equations:
\begin{align*}
&E_1(\lambda)=-\alpha \lambda, \\
&E_2(\lambda)=0.
\end{align*}
\end{lemma}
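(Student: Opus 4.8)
The plan is to extract the stated differential equations from the Codazzi equation \eqref{2.3}, exactly as in similar classification problems for Lagrangian surfaces. First I would write out $(\nabla h)(E_1,E_2,E_2)=(\nabla h)(E_2,E_1,E_2)$ and $(\nabla h)(E_1,E_1,E_2)=(\nabla h)(E_2,E_1,E_1)$ using the definition \eqref{2.1}, the given expressions $h(E_1,E_1)=JE_2$, $h(E_2,E_2)=\lambda JE_2$, $h(E_1,E_2)=\lambda JE_1$, and the relation $\nabla_X^\perp JY = J\nabla_X Y$. Since $\nabla_{E_1}E_1=\alpha E_1$, $\nabla_{E_1}E_2=-\alpha E_2$, $\nabla_{E_2}E_1=-\beta E_1$, $\nabla_{E_2}E_2=\beta E_2$, each term $\nabla^\perp_X h(Y,Z)$ becomes $X(\text{coefficient})\,JE_k$ plus coefficient times $J\nabla_X E_k$, so everything reduces to linear combinations of $JE_1$ and $JE_2$ with coefficients that are functions of $\lambda,\alpha,\beta$ and their $E_1,E_2$-derivatives.

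Concretely, for the triple $(E_1,E_2,E_2)$ one computes
$(\nabla h)(E_1,E_2,E_2)=\nabla^\perp_{E_1}(\lambda JE_2)-2h(\nabla_{E_1}E_2,E_2)
 = (E_1\lambda)JE_2-\lambda\alpha\,JE_2 + 2\alpha\lambda\,JE_2$ (using $h(E_2,E_2)=\lambda JE_2$), while
$(\nabla h)(E_2,E_1,E_2)=\nabla^\perp_{E_2}(\lambda JE_1)-h(\nabla_{E_2}E_1,E_2)-h(\nabla_{E_2}E_2,E_1)
 = (E_2\lambda)JE_1-\lambda\beta\,JE_1 +\beta\lambda\,JE_1 - \beta\lambda\,JE_1$; comparing $JE_2$-components gives $E_1(\lambda)=-\alpha\lambda$, and comparing $JE_1$-components gives $E_2(\lambda)=\beta\lambda$. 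To pin down $\beta=0$ I would then use a second Codazzi relation — the triple $(E_1,E_1,E_2)$ versus $(E_2,E_1,E_1)$: here $(\nabla h)(E_1,E_1,E_2)$ involves $\nabla^\perp_{E_1}(\lambda JE_1)$ and $h(\nabla_{E_1}E_1,E_2)+h(\nabla_{E_1}E_2,E_1)$, producing among others a term in $JE_2$ with a $\beta$-free coefficient, whereas $(\nabla h)(E_2,E_1,E_1)=\nabla^\perp_{E_2}(JE_2)-2h(\nabla_{E_2}E_1,E_1) = -\beta\,JE_2 + 2\beta\lambda\,JE_1$; matching the $JE_2$-components forces a relation that, combined with $\lambda$ being nowhere zero, yields $\beta=0$. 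Feeding $\beta=0$ back into the earlier identity then upgrades $E_2(\lambda)=\beta\lambda$ to $E_2(\lambda)=0$.

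The only genuine subtlety — the step I expect to be the main obstacle — is bookkeeping: making sure all four instances of Codazzi are written consistently, that the $h$ of a covariant derivative such as $h(\alpha E_1, E_2)=\alpha\lambda JE_1$ is expanded correctly, and that the $JE_1$- versus $JE_2$-components are separated without sign errors; the algebra itself is linear and short once organized. I would also remark that since $\{E_1,E_2\}$ is a null frame with $\langle E_1,E_2\rangle=1$, there are no normalization constraints to worry about, and that no use of the ambient curvature $c$ is needed here because Codazzi \eqref{2.3} for a Lagrangian submanifold of a complex space form has no curvature term. This completes the derivation of $\beta=0$, $E_1(\lambda)=-\alpha\lambda$, and $E_2(\lambda)=0$.
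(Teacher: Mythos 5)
Your approach is the same as the paper's: both derive the lemma from the Codazzi equation applied to the two index triples, and your first computation is correct --- $(\nabla h)(E_1,E_2,E_2)=(E_1(\lambda)+\alpha\lambda)JE_2$ against $(\nabla h)(E_2,E_1,E_2)=(E_2(\lambda)-\beta\lambda)JE_1$ does give $E_1(\lambda)=-\alpha\lambda$ and $E_2(\lambda)=\beta\lambda$ by separating the $JE_1$- and $JE_2$-components. However, your displayed evaluation of the second Codazzi pair contains two slips. Since $\nabla_{E_2}E_2=\beta E_2$, one has $\nabla^\perp_{E_2}(JE_2)=J(\beta E_2)=+\beta\, JE_2$, not $-\beta\, JE_2$; and since $h(E_1,E_1)=JE_2$, one has $-2h(\nabla_{E_2}E_1,E_1)=2\beta\, h(E_1,E_1)=2\beta\, JE_2$, not $2\beta\lambda\, JE_1$ (you substituted $h(E_1,E_2)=\lambda JE_1$ where $h(E_1,E_1)=JE_2$ was required). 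The correct value is $(\nabla h)(E_2,E_1,E_1)=3\beta\, JE_2$, while $(\nabla h)(E_1,E_1,E_2)=(E_1(\lambda)+\alpha\lambda)JE_1$ has no $JE_2$-component at all, so the comparison gives $3\beta=0$ directly; no appeal to $\lambda\ne 0$ is needed at this step. These slips happen not to affect the conclusion $\beta=0$, and feeding it back into $E_2(\lambda)=\beta\lambda$ gives $E_2(\lambda)=0$ as you say, so the overall structure of your argument is sound and matches the paper's proof.
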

\begin{proof}We have that
\begin{align*}
(\nabla h)(E_2,E_1,E_1)&=\nabla^\perp_{E_2}JE_2-2 h(\nabla_{E_2}E_1,E_1)\\
&=\beta JE_2 +2 \beta JE_2= 3 \beta JE_2.
\end{align*}
On the other hand, we have that
\begin{align*}
(\nabla h)(E_1,E_2,E_1)&=\nabla^\perp_{E_1}\lambda JE_1- h(\nabla_{E_1}E_2,E_1)-h(E_2,\nabla_{E_1}E_1)\\
&=(E_1(\lambda)+\alpha \lambda) JE_1 +\alpha h(E_2,E_1)-\alpha h(E_2,E_1)\\
&=(E_1(\lambda)+\alpha \lambda) JE_1.
\end{align*}
From the Codazzi equation, we therefore obtain that $\beta = 0$ and
$E_1(\lambda)= -\alpha \lambda$. Similarly from the Codazzi equation $(\nabla h)(E_1,E_2,E_2)=(\nabla h)(E_2,E_1,E_2)$, we now deduce that $E_2(\lambda)= 0$.
\end{proof}

\begin{lemma} We have that $c=0$ and $\alpha$ satisfies
\[
E_2(\alpha)= 0.
\]
\end{lemma}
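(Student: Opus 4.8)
The plan is to derive both conclusions from two inputs: the integrability of the first-order system satisfied by $\lambda$ (which forces $E_2(\alpha)=0$), and then the Gauss equation \eqref{2.2} (which forces $c=0$).

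First I would record the Lie bracket of the frame. Since the previous lemma gives $\beta=0$, torsion-freeness of $\nabla$ yields $[E_1,E_2]=\nabla_{E_1}E_2-\nabla_{E_2}E_1=-\alpha E_2$. Applying both sides to the function $\lambda$ and using $E_1(\lambda)=-\alpha\lambda$ and $E_2(\lambda)=0$ from the previous lemma, one gets on one hand $[E_1,E_2](\lambda)=-\alpha E_2(\lambda)=0$, and on the other hand $E_1(E_2(\lambda))-E_2(E_1(\lambda))=-E_2(-\alpha\lambda)=E_2(\alpha)\,\lambda+\alpha E_2(\lambda)=E_2(\alpha)\,\lambda$. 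Hence $E_2(\alpha)\,\lambda=0$, and since $\lambda$ is nowhere vanishing this gives $E_2(\alpha)=0$.

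Next I would compute a single curvature component directly from the connection coefficients. Using $\nabla_{E_2}E_1=0$, $\nabla_{E_1}E_1=\alpha E_1$ and $[E_1,E_2]=-\alpha E_2$, a short computation gives $R(E_1,E_2)E_1=-E_2(\alpha)E_1$, so that $\langle R(E_1,E_2)E_1,E_2\rangle=-E_2(\alpha)$. I would then compare this with the Gauss equation \eqref{2.2} for $X=Z=E_1$, $Y=W=E_2$. The two terms quadratic in $h$ are $\langle h(E_2,E_1),h(E_1,E_2)\rangle=\lambda^2\langle JE_1,JE_1\rangle$ and $\langle h(E_1,E_1),h(E_2,E_2)\rangle=\lambda\langle JE_2,JE_2\rangle$, both of which vanish because $J$ preserves the metric and $E_1,E_2$ are lightlike; the ambient curvature term reduces to $c\,\langle E_1,E_2\rangle\langle E_2,E_1\rangle=c$. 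Therefore $-E_2(\alpha)=c$, and combined with $E_2(\alpha)=0$ this yields $c=0$, which completes the proof.

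There is no serious obstacle here; the only points requiring care are the sign and ordering conventions in the curvature computation and the observation that the extrinsic side of the Gauss equation collapses entirely because the relevant normal vectors $JE_1,JE_2$ are null. One could equally well use the component $\langle R(E_1,E_2)E_2,E_1\rangle$, which produces the same relation $E_2(\alpha)=-c$.
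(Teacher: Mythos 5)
Your proof is correct and follows essentially the same route as the paper: the identical bracket computation $[E_1,E_2](\lambda)$ gives $E_2(\alpha)=0$, and the Gauss equation applied to $R(E_1,E_2)E_1=-E_2(\alpha)E_1$ gives $c=0$. The only cosmetic difference is that you contract the Gauss equation with $E_2$ (so the extrinsic terms vanish because $JE_1,JE_2$ are null), whereas the paper keeps the vector form with shape operators and observes that $\lambda A_{JE_1}E_1$ and $A_{JE_2}E_2$ cancel.
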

\begin{proof}
We compute $[E_1,E_2] (\lambda)$ in two different ways.
We have that
\begin{align*}
[E_1,E_2] (\lambda)&=E_1(E_2(\lambda))-E_2(E_1(\lambda))\\
&=E_2(\alpha \lambda)\\
&=E_2(\alpha)\lambda
\end{align*}
and
\begin{align*}
[E_1,E_2] (\lambda)&=(\nabla_{E_1}E_2-\nabla_{E_2} E_1)(\lambda))\\
&=-\alpha E_2(\lambda)=0.
\end{align*}
Since $\lambda \ne 0$, we deduce that $E_2(\alpha)=0$.

A direct computation then yields that
\begin{align*}
R(E_1,E_2) E_1&= -\nabla_{E_2} \nabla_{E_1}E_1 -\nabla_{\nabla_{E_1}E_2} E_1\\
&=-\nabla_{E_2} (\alpha E_1)+\alpha \nabla_{E_2} E_1\\
&=-E_2(\alpha) E_1=0.
\end{align*}
So from the Gauss equation we obtain that
\begin{align*}
0&=c E_1 +A_{h(E_1,E_2)} e_1 -A_{h(E_1,E_1)} E_2\\
&=c E_1 + \lambda A_{JE_1} E_1 -A_{JE_2} E_2\\
&=c E_1.
\end{align*}
Hence the ambient space must be flat.
\end{proof}

The previous lemma immediately implies:
\begin{theorem} There does not exist a proper lightlike isotropic Lagrangian, Lorentzian surface of Type 1 in $\mathbb CP^2_1(4)$.
\end{theorem}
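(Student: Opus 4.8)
The plan is to read off the statement as an immediate consequence of the previous lemma. That lemma shows that for a proper lightlike isotropic Lagrangian, Lorentzian surface of Type 1, the Gauss equation forces the relation $0 = c\, E_1$, and hence $c = 0$: the ambient complex space form must be flat. I would simply invoke this.

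First I would recall that in our normalization $\mathbb{CP}^2_1(4)$ is the model with $c = 1$. Then, arguing by contradiction, I would suppose that a proper lightlike isotropic Lagrangian, Lorentzian surface of Type 1 exists in $\mathbb{CP}^2_1(4)$. Applying the preceding lemma to this surface yields $c = 0$, contradicting $c = 1$. Hence no such surface exists.

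There is essentially no obstacle here: all the work has been done in the two lemmas of this section, whose computation of $(\nabla h)(E_2,E_1,E_1)$, $(\nabla h)(E_1,E_2,E_1)$ and of $R(E_1,E_2)E_1$, combined with the Codazzi and Gauss equations, already pins down $\beta = 0$, the differential equations for $\lambda$ and $\alpha$, and finally $c = 0$. The only thing to be careful about is the sign/scaling convention: one must make sure that in the chosen model $\mathbb{CP}^2_1(4)$ genuinely has curvature constant $c = 1 \neq 0$, which is exactly the convention fixed in the Preliminaries (where we reduced to the cases $c = 0$ or $c = 1$ and identified $\mathbb{CP}^2_1(4)$ with the $c = 1$ case). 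With that convention in place, the theorem is immediate.
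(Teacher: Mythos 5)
Your proposal is correct and matches the paper exactly: the paper states that the preceding lemma (which derives $c=0$ from the Gauss equation for Type 1 surfaces) "immediately implies" the theorem, since $\mathbb{CP}^2_1(4)$ corresponds to $c=1$. Nothing further is needed.
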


Moreover, we can also show that
\begin{theorem} Let $M$ be a proper lightlike isotropic Lagrangian, Lorentzian surface of Type 1 in $\mathbb C^2_1$. Then $M$ is locally congruent with
$$(\alpha(x) \tfrac{1}{\sqrt{2}} (-i,-i)+ \beta(x)\tfrac{1}{\sqrt{2}} (1,-1))e^{iv},$$
where $\alpha'(x) \beta(x)-\alpha(x) \beta'(x) \ne 0$.
\end{theorem}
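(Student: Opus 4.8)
The plan is to turn the structure equations of $M$ into an explicit linear system of partial differential equations for the immersion $F\colon M\to\C^2_1$, integrate it, and recognise the resulting normal form; here we use throughout that $\C^2_1$ is flat, so that $\widetilde\nabla$ is ordinary differentiation of $\C^2_1$–valued functions and $J$ is multiplication by $i$. First I would pass from the given null frame to a coordinate one. By the two preceding lemmas, $\beta=0$, $c=0$, $E_1(\lambda)=-\alpha\lambda$ and $E_2(\lambda)=E_2(\alpha)=0$; after replacing $\lambda$ by $|\lambda|$ if necessary (which changes nothing essential) we may assume $\lambda>0$ near $p$. Setting $\tilde E_1=\lambda E_1$ and $\tilde E_2=\lambda^{-1}E_2$ again gives a null frame, and a short computation with the relations above shows $\nabla_{\tilde E_i}\tilde E_j=0$ for all $i,j$. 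Hence $[\tilde E_1,\tilde E_2]=0$, there are coordinates $(x,v)$ with $\partial_x=\tilde E_1$ and $\partial_v=\tilde E_2$, the induced metric is $2\,dx\,dv$, and $\lambda=\lambda(x)$. Expressing the second fundamental form in this frame,
\[
h(\partial_x,\partial_x)=\lambda^3\,J\partial_v,\qquad h(\partial_x,\partial_v)=J\partial_x,\qquad h(\partial_v,\partial_v)=J\partial_v .
\]

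Next I would integrate the formula of Gauss, which in these coordinates (all Christoffel symbols vanishing) reads
\[
F_{xx}=i\lambda^3 F_v,\qquad F_{xv}=iF_x,\qquad F_{vv}=iF_v .
\]
Solving the third equation in $v$ yields $F=-ie^{iv}G(x)+H(x)$; substituting into the second forces $H'\equiv0$, so after a translation of $\C^2_1$ we may take $H=0$ and $F=-ie^{iv}G(x)$. The first equation then becomes $G''=-\lambda^3 G$. Since $\lambda$ is nowhere zero, $y''+\lambda^3 y=0$ has a two–dimensional space of solutions; choosing a basis $\{\alpha(x),\beta(x)\}$ we get $G=\alpha\mathbf p+\beta\mathbf q$ with constant $\mathbf p,\mathbf q\in\C^2$, hence
\[
F(x,v)=\bigl(\alpha(x)\,\mathbf A+\beta(x)\,\mathbf B\bigr)e^{iv},\qquad \mathbf A:=-i\mathbf p,\ \ \mathbf B:=-i\mathbf q .
\]
Since $y''+\lambda^3 y=0$ has no first order term, $\alpha'\beta-\alpha\beta'$ is a nonzero constant.

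It remains to identify $\mathbf A$ and $\mathbf B$. Write $b$ for the standard Hermitian form of index one on $\C^2_1$, so $\langle\cdot,\cdot\rangle=\operatorname{Re}b$. From $\langle F_v,F_v\rangle=0$ and the linear independence of the functions $\alpha^2,\alpha\beta,\beta^2$ one obtains $b(\mathbf A,\mathbf A)=b(\mathbf B,\mathbf B)=0$ and $\operatorname{Re}b(\mathbf A,\mathbf B)=0$, and then $\langle F_x,F_v\rangle=1$ together with the Lagrangian condition $\langle JF_x,F_v\rangle=0$ forces $b(\mathbf A,\mathbf B)$ to be a nonzero purely imaginary number. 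Rescaling $\alpha$ and $\mathbf A$ (and $\beta$ and $\mathbf B$) by a suitable constant that leaves $F$ unchanged, we may assume $b(\mathbf A,\mathbf B)=i$, still with $\alpha'\beta-\alpha\beta'\ne0$. Now $\tfrac1{\sqrt2}(-i,-i)$ and $\tfrac1{\sqrt2}(1,-1)$ are $b$–null, have Hermitian product $i$, and form a $\C$–basis of $\C^2$; hence the $\C$–linear map $T$ with $T\mathbf A=\tfrac1{\sqrt2}(-i,-i)$ and $T\mathbf B=\tfrac1{\sqrt2}(1,-1)$ preserves $b$, i.e.\ is a holomorphic isometry of $\C^2_1$, and $T\circ F$ is exactly the map in the statement, with $\alpha'\beta-\alpha\beta'\ne0$. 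The step I expect to be the main obstacle is this last one: carefully tracking which Hermitian products of $\mathbf A$ and $\mathbf B$ are forced by the induced metric and the Lagrangian condition, and checking that the remaining ambiguity in $\mathbf A,\mathbf B$ is exactly the action of $U(1,1)$, so that $M$ is determined up to congruence. The frame reduction and the integration of the Gauss system are, by contrast, routine once the two preceding lemmas are in hand.
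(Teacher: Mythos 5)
Your proposal is correct and follows essentially the same route as the paper: the same rescaled null frame $\lambda E_1$, $\lambda^{-1}E_2$, the same flat coordinates, the same system $f_{vv}=if_v$, $f_{uv}=if_u$, $f_{uu}=i\lambda^3 f_v$, and the same integration leading to $A_1''=-\lambda^3 A_1$; your final step, pinning down the constant vectors through the Hermitian products forced by the metric and the Lagrangian condition and then applying an element of $U(1,1)$, is just a more explicit version of the paper's ``by choosing the initial conditions we may assume''. The only piece of the paper's proof you omit is the converse verification that every map of the stated form with merely nonvanishing (not necessarily constant) Wronskian is indeed such a surface, which the paper handles by reparametrising the curve $(\alpha,\beta)$ so that $\alpha'\beta-\alpha\beta'=1$.
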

\begin{proof}
We introduce vector fields $\lambda E_1$ and $\tfrac{1}{\lambda} E_2$. We have that
\begin{align*}
[\lambda E_1,\tfrac{1}{\lambda} E_2]&=-\tfrac{E_1(\lambda)}{\lambda} E_2+[E_1,E_2]\\
&=\alpha E_2-\alpha E_2 =0.
\end{align*}
Therefore, there exist coordinates $u$ and $v$ such that
$\partial u =\lambda E_1$ and $\partial v= \tfrac{1}{\lambda} E_2$. If we denote the immersion by $f$, it follows that
\begin{align*}
f_{vv}&=if_v,\\
f_{uv}&=i f_u,\\
f_{uu}&=\lambda E_1(\lambda) E_1 + \lambda^2 \nabla_{E_1}E_1+\lambda^2 h(E_1,E_1)\\
&=-\lambda^2 \alpha + \lambda^2 \alpha + \lambda^2 i E_2\\
&=\lambda^3 i f_v,
\end{align*}
where $\lambda$ is a function depending only on $u$.
Integrating the first two equations it follows that
$$f(u,v) = A_1(u) e^{iv} +A_2,$$
where $A_1$ is a vector valued function and $A_2$ is a constant.
Of course, we may assume that $A_2$ vanishes by applying a translation of $\mathbb{C}^2_1$. The third equation then tells us
that
\[
A_1''= -\lambda^3 A_1.
\]
Note that this is precisely the expression of a curve lying in the plane spanned by $A_1(0)$ and $A_1'(0)$ parametrised in such a way
that $\vert A_1 A_1'\vert$ is constant. Given that $M$ is a Lagrangian surface we must have that $A_1$ and $A_1'$ are linearly independent (over $\mathbb C$)
and that the plane spanned by $A_1$ and $i A_1'$ is real. Therefore, the constant is non-vanishing.
Since $f_u =\lambda E_1$ and $f_v = \tfrac 1\lambda E_2$, by choosing the initial conditions we may assume that $A_1(0)=\tfrac{1}{\sqrt{2}} (-i,-i)$ and
 $A_1'(0)=\tfrac{1}{\sqrt{2}} (1,-1)$.

Conversely, if we define a surface by
$$f(x,v)=(\alpha(x) \tfrac{1}{\sqrt{2}} (-i,-i)+ \beta(x)\tfrac{1}{\sqrt{2}} (1,-1))e^{iv},$$
where $\alpha'(x) \beta(x)-\alpha(x) \beta'(x) \ne 0$, we see that just as for the Euclidean arc length of a planar curve, it is possible to construct a parameter $u$ for the curve $(\alpha,\beta)$ such that $\alpha'(u) \beta(u)-\alpha(u) \beta'(u) =1$. A straightforward computation then shows that the surface $f(u,v)$ has the desired properties.
\end{proof}

\section{Proper Lightlike isotropic Lagrangian, Lorentzian surfaces of Type 2}\label{sec-6}

We  take a null frame in a neighborhood of the  point $p$ as constructed in the previous section. Then we have a frame $\{E_1,E_2\}$ such that
\begin{equation*}
<E_i,E_j>= (1-\delta_{ij}), \qquad i,j \in \{1,2\},
\end{equation*}
and
\begin{align*}
&h(E_1,E_1)= JE_1,\\
&h(E_1,E_2)= \lambda J E_1+ JE_2,\\
&h(E_2,E_2)=\lambda JE_2,
\end{align*}
where $\lambda$ is a function on $M$. We
write
\begin{alignat*}{2}
&\nabla_{E_1} E_1 = \alpha E_1,&\qquad  &\nabla_{E_1} E_2 = -\alpha E_2,\\
&\nabla_{E_2} E_1 = -\beta E_1,&\qquad  &\nabla_{E_2} E_2 = \beta E_2,
\end{alignat*}
where $\alpha$ and $\beta$ are functions.

\begin{lemma} We have that $\alpha=0$ and $\lambda$ satisfies the following system of differential equations:
\begin{align*}
&E_1(\lambda)=\beta, \\
&E_2(\lambda)=\lambda \beta.
\end{align*}
\end{lemma}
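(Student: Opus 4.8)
The plan is to extract everything from the Codazzi equation \eqref{2.3}, exactly as in the proof of the first lemma of Section \ref{sec-5}. Since the second fundamental form is already written down explicitly in terms of $\lambda$ and the frame, and since $\nabla^{\perp}_X JY = J\nabla_X Y$, each $(\nabla h)(E_i,E_j,E_k)$ is a combination of $JE_1$ and $JE_2$ whose coefficients are built from $\alpha$, $\beta$, $\lambda$ and their $E_1$- and $E_2$-derivatives. Comparing the $JE_1$- and $JE_2$-components of a Codazzi identity then produces scalar equations, and the lemma should drop out after two such comparisons.

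First I would compute $(\nabla h)(E_1,E_2,E_1)$ and $(\nabla h)(E_2,E_1,E_1)$. Differentiating $h(E_1,E_2)=\lambda JE_1+JE_2$ and using $\nabla_{E_1}E_1=\alpha E_1$, $\nabla_{E_1}E_2=-\alpha E_2$, one finds that the two correction terms $-h(\nabla_{E_1}E_2,E_1)-h(\nabla_{E_1}E_1,E_2)$ cancel, leaving $(\nabla h)(E_1,E_2,E_1)=(E_1(\lambda)+\alpha\lambda)JE_1-\alpha JE_2$. Differentiating $h(E_1,E_1)=JE_1$ with $\nabla_{E_2}E_1=-\beta E_1$ gives $(\nabla h)(E_2,E_1,E_1)=\beta JE_1$, the two $-h(\nabla_{E_2}E_1,E_1)$ terms contributing $+2\beta JE_1$. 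Equating the two, the $JE_2$-component forces $\alpha=0$ and the $JE_1$-component then gives $E_1(\lambda)=\beta$.

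Next I would apply Codazzi to the triple $(E_1,E_2,E_2)$ versus $(E_2,E_1,E_2)$. Differentiating $h(E_2,E_2)=\lambda JE_2$ yields $(\nabla h)(E_1,E_2,E_2)=(E_1(\lambda)+\alpha\lambda)JE_2$, which equals $E_1(\lambda)JE_2$ once $\alpha=0$ is known; differentiating $h(E_1,E_2)$ in the $E_2$-direction (using $\nabla_{E_2}E_1=-\beta E_1$, $\nabla_{E_2}E_2=\beta E_2$, with the two $h(\cdot,\cdot)$ corrections again cancelling) yields $(\nabla h)(E_2,E_1,E_2)=(E_2(\lambda)-\lambda\beta)JE_1+\beta JE_2$. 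Comparing $JE_1$-components gives $E_2(\lambda)=\lambda\beta$, and the $JE_2$-components reproduce $E_1(\lambda)=\beta$ consistently.

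No genuine obstacle is expected: the computation is pure bookkeeping with the Gauss--Weingarten data, the only care needed being the cancellations among the three terms defining $(\nabla h)$ and the signs in $\nabla_{E_i}E_j$. The one structural difference from Type 1 is that $h(E_1,E_2)$ now carries a nonzero $JE_2$-component, which is precisely why the Type 1 conclusion ``$\beta=0$'' is replaced here by ``$\alpha=0$'' together with the two first-order equations for $\lambda$.
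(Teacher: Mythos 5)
Your proposal is correct and follows essentially the same route as the paper: both apply the Codazzi equation to the pairs $(\nabla h)(E_1,E_2,E_1)=(\nabla h)(E_2,E_1,E_1)$ and $(\nabla h)(E_1,E_2,E_2)=(\nabla h)(E_2,E_1,E_2)$, and all of your intermediate expressions (including the cancellation of the two $h$-correction terms and the sign bookkeeping) match the paper's computation.
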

\begin{proof}We have that
\begin{align*}
(\nabla h)(E_2,E_1,E_1)&=\nabla^\perp_{E_2}JE_1-2 h(\nabla_{E_2}E_1,E_1)\\
&=-\beta JE_1 +2 \beta JE_1= \beta JE_1.
\end{align*}
On the other hand, we have that
\begin{align*}
(\nabla h)(E_1,E_2,E_1)&=\nabla^\perp_{E_1}(\lambda JE_1+JE_2)- h(\nabla_{E_1}E_2,E_1)-h(E_2,\nabla_{E_1}E_1\\
&=(E_1(\lambda)+\alpha \lambda) JE_1 -\alpha JE_2+\alpha h(E_2,E_1)-\alpha h(E_2,E_1)\\
&=(E_1(\lambda)+\alpha \lambda) JE_1-\alpha  JE_2.
\end{align*}
From the Codazzi equation, we therefore obtain that $\alpha = 0$ and
$E_1(\lambda)= \beta$. Similarly from the Codazzi equation $(\nabla h)(E_1,E_2,E_2)=(\nabla h)(E_2,E_1,E_2)$, we now deduce that $E_2(\lambda)= \lambda \beta$.
\end{proof}

\begin{lemma} We have that $\beta$ satisfies
\begin{align*}
&E_1(\beta)=-c - \lambda,\\
&E_2(\beta)=\lambda(-c-\lambda).
\end{align*}
\end{lemma}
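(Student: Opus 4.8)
The plan is to combine the Gauss equation with a bracket identity, exactly in the spirit of the Type 1 computation. From the previous lemma we already have $\alpha=0$, so the connection coefficients collapse to $\nabla_{E_1}E_1=\nabla_{E_1}E_2=0$, $\nabla_{E_2}E_1=-\beta E_1$, $\nabla_{E_2}E_2=\beta E_2$, and consequently $[E_1,E_2]=\nabla_{E_1}E_2-\nabla_{E_2}E_1=\beta E_1$.

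First I would compute $R(E_1,E_2)E_1$ purely intrinsically from these coefficients: since $\nabla_{E_1}E_1=0$ and $\nabla_{[E_1,E_2]}E_1=\beta\nabla_{E_1}E_1=0$, one obtains $R(E_1,E_2)E_1=\nabla_{E_1}(-\beta E_1)=-E_1(\beta)\,E_1$. Next I would expand the same vector via the Gauss equation in the shape-operator form already used in Section \ref{sec-5}, namely $R(E_1,E_2)E_1=cE_1+A_{h(E_1,E_2)}E_1-A_{h(E_1,E_1)}E_2$. Using $A_{JX}Y=-Jh(X,Y)$ together with $h(E_1,E_1)=JE_1$ and $h(E_1,E_2)=\lambda JE_1+JE_2$, a direct computation gives $A_{h(E_1,E_2)}E_1=2\lambda E_1+E_2$ and $A_{h(E_1,E_1)}E_2=\lambda E_1+E_2$, so that $R(E_1,E_2)E_1=(c+\lambda)E_1$. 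Comparing the two expressions yields $E_1(\beta)=-c-\lambda$.

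For the second equation I would not appeal to $R(E_1,E_2)E_2$, which merely reproduces the same identity, but instead evaluate $[E_1,E_2](\lambda)$ in two ways, relying on $E_1(\lambda)=\beta$ and $E_2(\lambda)=\lambda\beta$ from the previous lemma. On one side $[E_1,E_2](\lambda)=E_1(\lambda\beta)-E_2(\beta)=\beta^2+\lambda E_1(\beta)-E_2(\beta)$, while on the other side $[E_1,E_2](\lambda)=(\beta E_1)(\lambda)=\beta^2$. Hence $E_2(\beta)=\lambda E_1(\beta)=\lambda(-c-\lambda)$, which is the desired formula.

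The computation is essentially mechanical once $\alpha=0$ is in hand; the only step that needs a little care is the passage to the shape-operator form of the Gauss equation, i.e. noting that the ambient holomorphic-curvature terms involving $J$ are normal and therefore do not enter the tangential vector $R(E_1,E_2)E_1$, together with keeping the signs straight in the evaluation of the shape operators. I do not expect any genuine obstacle.
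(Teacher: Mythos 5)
Your proof is correct and follows essentially the same route as the paper: the Gauss equation applied to $R(E_1,E_2)E_1$ (computed intrinsically as $-E_1(\beta)E_1$ using $\alpha=0$) gives $E_1(\beta)=-(c+\lambda)$, and evaluating $[E_1,E_2](\lambda)$ in two ways using the previous lemma gives $E_2(\beta)=\lambda(-c-\lambda)$. The shape-operator evaluations and the observation that $R(E_1,E_2)E_2$ yields nothing new are all consistent with the paper's computation.
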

\begin{proof}
A direct computation yields that
\begin{align*}
R(E_1,E_2) E_1&= \nabla_{E_1} \nabla_{E_2}E_1 +\nabla_{\nabla_{E_2}E_1} E_1\\
&=-\nabla_{E_1} (\beta E_1)-\beta \nabla_{E_1} E_1\\
&=-E_1(\beta) E_1.
\end{align*}
So from the Gauss equation we obtain that
\begin{align*}
-E_1(\beta)E_1&=c E_1 +A_{h(E_1,E_2)} E_1 -A_{h(E_1,E_1)} E_2\\
&=c E_1 + \lambda A_{JE_1} E_1+A_{JE_2} E_1 -A_{JE_1} E_2\\
&=c E_1+ \lambda E_1,
\end{align*}
which reduces to $E_1(\beta) =-(c+\lambda)$. In order to obtain the
$E_2$ derivative of $\beta$,
we compute $[E_1,E_2] (\lambda)$ in two different ways.
We have that
\begin{align*}
[E_1,E_2] (\lambda)&=E_1(E_2(\lambda))-E_2(E_1(\lambda))\\
&=E_1(\lambda \beta)-E_2(\beta)\\
&=\beta^2 + \lambda(-c -\lambda) -E_2(\beta)
\end{align*}
and
\begin{align*}
[E_1,E_2] (\lambda)&=(\nabla_{E_1}E_2-\nabla_{E_2} E_1)(\lambda))\\
&=\beta E_1(\lambda)=\beta^2,
\end{align*}
which clearly concludes the proof.
\end{proof}

It follows by a direct computation that
\begin{corollary}
There exists a constant $r$ such that
$$(\lambda+c)^2+ \beta^2= r^2.$$
\end{corollary}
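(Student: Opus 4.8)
The claim is that $(\lambda+c)^2 + \beta^2$ is constant on $M$, so the plan is simply to differentiate this quantity along both frame vector fields $E_1$ and $E_2$ and show each derivative vanishes, using the two preceding lemmas of Section~\ref{sec-6}. Since $\{E_1, E_2\}$ spans the tangent space at each point, vanishing of both derivatives forces the function to be locally constant; as $M$ is connected (or by working on a connected neighborhood), it equals a constant, which we may write as $r^2$ since the expression is manifestly non-negative.

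For the first derivative, I would compute
\[
E_1\bigl((\lambda+c)^2 + \beta^2\bigr) = 2(\lambda+c)E_1(\lambda) + 2\beta E_1(\beta).
\]
By Lemma stating $E_1(\lambda) = \beta$ and Lemma stating $E_1(\beta) = -c-\lambda = -(\lambda+c)$, this becomes $2(\lambda+c)\beta + 2\beta\bigl(-(\lambda+c)\bigr) = 0$. For the second derivative, using $E_2(\lambda) = \lambda\beta$ and $E_2(\beta) = \lambda(-c-\lambda) = -\lambda(\lambda+c)$, I get
\[
E_2\bigl((\lambda+c)^2 + \beta^2\bigr) = 2(\lambda+c)\lambda\beta + 2\beta\bigl(-\lambda(\lambda+c)\bigr) = 0.
\]
Both derivatives vanish identically, so the function is constant on each connected component of the domain on which the frame is defined.

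Honestly, there is no real obstacle here — the corollary is a formal consequence of the differential equations already established, and the paper itself flags it as following "by a direct computation." The only point requiring a word of care is the global constancy: one should note that $\lambda$ and $\beta$ a priori satisfy these equations only on the neighborhood where the Type 2 null frame is defined, so strictly speaking the constant $r$ is local; but since the surface of Type 2 is connected one propagates the constant across overlapping frame neighborhoods, the transition being governed by the freedom in choosing the null frame (which does not affect the invariantly-defined scalars entering the expression). I would therefore present the proof in two displayed lines — one for $E_1$, one for $E_2$ — cite the two lemmas, and conclude.
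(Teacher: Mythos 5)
Your proposal is correct and is precisely the ``direct computation'' the paper alludes to: differentiating $(\lambda+c)^2+\beta^2$ along $E_1$ and $E_2$ and applying the two preceding lemmas, so that both derivatives vanish and the quantity is constant and non-negative, hence of the form $r^2$. Nothing further is needed.
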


\begin{lemma} There exist local coordinates $u$ and $v$ such that
\begin{align*}
&\tfrac{\partial}{\partial u} = E_1,\\
&\tfrac{\partial}{\partial v} = E_2-\lambda E_1.
\end{align*}
\end{lemma}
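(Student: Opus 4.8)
The plan is to exhibit $E_1$ and $E_2-\lambda E_1$ as a pair of pointwise linearly independent, \emph{commuting} vector fields, and then invoke the standard fact that such a pair can be simultaneously rectified, i.e.\ realized as coordinate vector fields of a local chart.

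First I would record what the preceding lemmas of this section already give in the Type~2 situation: $\alpha=0$ and $E_1(\lambda)=\beta$. Hence the only surviving connection coefficients are $\nabla_{E_2}E_1=-\beta E_1$ and $\nabla_{E_2}E_2=\beta E_2$, while $\nabla_{E_1}E_1=\nabla_{E_1}E_2=0$. In particular
\[
[E_1,E_2]=\nabla_{E_1}E_2-\nabla_{E_2}E_1=\beta E_1 .
\]
This already shows $E_1$ and $E_2$ themselves do not commute once $\beta\neq 0$, which is what forces the correction term $-\lambda E_1$.

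Next I would compute the Lie bracket of $X:=E_1$ and $Y:=E_2-\lambda E_1$. Using bilinearity of the bracket, the Leibniz rule $[E_1,\lambda E_1]=E_1(\lambda)E_1+\lambda[E_1,E_1]$, and $[E_1,E_1]=0$,
\[
[X,Y]=[E_1,E_2]-[E_1,\lambda E_1]=[E_1,E_2]-E_1(\lambda)\,E_1=\beta E_1-\beta E_1=0 ,
\]
where the last equality uses $E_1(\lambda)=\beta$. Since $\{E_1,E_2\}$ is a frame on the neighborhood under consideration, $X$ and $Y$ are linearly independent at every point there.

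Finally, since $X$ and $Y$ are linearly independent commuting vector fields on a surface, their local flows $\Phi^X$ and $\Phi^Y$ commute, and the map $(u,v)\mapsto(\Phi^X_u\circ\Phi^Y_v)(p)$ is a local diffeomorphism near $p$ whose coordinate vector fields are exactly $\partial/\partial u=X=E_1$ and $\partial/\partial v=Y=E_2-\lambda E_1$. This is the asserted coordinate system. I do not expect any genuine obstacle here: the whole content of the lemma is the bracket computation above, and the only input it needs from the earlier lemmas is the identity $E_1(\lambda)=\beta$, which is precisely what makes the combination $E_2-\lambda E_1$ commute with $E_1$.
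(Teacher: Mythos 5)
Your proposal is correct and follows essentially the same route as the paper: both prove the lemma by computing $[E_1,\,E_2-\lambda E_1]=[E_1,E_2]-E_1(\lambda)E_1=\beta E_1-\beta E_1=0$ using $\alpha=0$ and $E_1(\lambda)=\beta$, and then invoking the standard rectification of commuting independent vector fields. Your additional remarks on the flows merely spell out the standard fact the paper leaves implicit.
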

\begin{proof}
We define vector fields
\begin{align*}
&U = E_1,\\
&V = E_2-\lambda E_1,
\end{align*}
and compute
\begin{align*}
[U,V]&=[E_1, E_2-\lambda E_1]\\
&=[E_1, E_2]-E_1(\lambda) E_1\\
&=\beta E_1-E_1(\lambda) E_1=0,
\end{align*}
which proves the result.
\end{proof}

It then follows immediately from the previous systems of differential equations that $\beta$ and $\lambda$ do not depend on the variable $v$ and are determined by
\begin{align*}
&\tfrac{\partial \lambda}{\partial u} = \beta,\\
&\tfrac{\partial \beta}{\partial u} = -(c+\lambda).
\end{align*}
Therefore, note that, after a translation of the $u$ coordinate if necessary,
we may suppose that
\begin{align*}
&\lambda=-c + r \sin u,\\
&\beta= r \cos u.
\end{align*}
In the above equations, the constant $r$ is allowed to be zero. In that case, we get the special solution $\beta=0$ and $\lambda=-c$.

In view of the dimension, if necessary changing the sign of the  metric on the ambient space, we only have to consider the cases $c=0$ or $c=1$.

\subsection{Case $\mathbf{c=0}$: Lightlike isotropic Lagrangian, Lorentzian surfaces in $\mathbb C_1^2$}
We denote the immersion by $f$. It follows from the previous equations that $f$ is determined by the system of differential equations:
\begin{align*}
f_{uu}&=i f_u,\\
f_{uv}&= -re^{-iu} f_u +i f_v,\\
f_{vv}&= r e^{-iu} f_v + i r^2 (e^{-2i u} -1) f_u.
\end{align*}
It follows from the first equation  that there exist
vector valued functions $g_1$ and $g_2$ such that
$$f(u,v)=g_1(v) e^{iu} +g_2(v).$$
Substituting this into the second equation gives
$$g_2'(v)=r g_1(v),$$
and the final equation now reduces to
\begin{equation*}
g_1''(v) e^{iu} + g_2''(v)=re^{-iu}( g_1'(v) e^{iu} + g_2'(v))-r^2 (e^{-2i u} -1)e^{iu} g_1(v).
\end{equation*}
Looking at the different powers of $e^{iu}$, we deduce that
\begin{align*}
&g_1''(v)=r^2 g_1(v),\\
&g_2''(v)=rg_1'(v),\\
&0=r g_2'(v)-r^2 g_1(v).
\end{align*}
So the remaining equations are
\begin{align*}
&g_2'(v)=r g_1(v),\\
&g_1''(v)=r^2 g_1(v).
\end{align*}
The solution of the above system depends on the value of $r$.

\subsubsection{Case $r=0$}
If $r=0$ we have that $g_2(v)$ is a constant vector. Hence by applying a translation we may assume that this vector vanishes. Therefore, we have that
$$f(u,v)= (v A_1 +A_2) e^{iu},$$
for some constant vectors $A_1$ and $A_2$. We take an initial point $p =(0,0)$.
Since $\lambda(0,0)= 0$, it follows that
\begin{align*}
&E_1(0,0)= \tfrac{\partial f}{\partial u}(0,0)=iA_2,\\
&E_2(0,0)=\tfrac{\partial f}{\partial v}(0,0)=A_1.
\end{align*}
It then follows from the choice of $E_1$ and $E_2$, together with the Lagrangian condition, that $\tfrac 1{\sqrt{2}} (A_1-i A_2)$, $\tfrac i{\sqrt{2}} (A_1-i A_2)$, $\tfrac 1{\sqrt{2}} (A_1+i A_2)$, $\tfrac i{\sqrt{2}} (A_1+i A_2)$ can be identified with $(1,0)$, $(i,0)$, $(0,1)$, $(0,i)$. This implies that
\begin{align*}
&A_1=(\tfrac {\sqrt{2}}2,\tfrac {\sqrt{2}}2),\\
&A_2=(\tfrac {\sqrt{2}}2i,-\tfrac {\sqrt{2}}2i).
\end{align*}

\subsubsection{Case $r \ne 0$} In this case we have that
$$g_1(v)= A_1 e^{rv} +A_2 e^{-rv}.$$
Therefore
$$g_2'(v)=r A_1 e^{rv} +r A_2 e^{-rv},$$
which implies that, after applying a suitable translation, we have that
$$g_2(v)=A_1 e^{rv} -A_2 e^{-rv}.$$
So we find that
$$f(u,v)=(A_1 e^{rv} +A_2 e^{-rv}) e^{iu} +(A_1 e^{rv} -A_2 e^{-rv}),$$
for some constant vectors $A_1$ and $A_2$. We take again as initial point $p =(0,0)$.
Since $\lambda(0,0)= 0$, it follows that
\begin{align*}
&E_1(0,0)= \tfrac{\partial f}{\partial u}(0,0)=i(A_1+A_2),\\
&E_2(0,0)=\tfrac{\partial f}{\partial v}(0,0)=2r A_1.
\end{align*}
Or equivalently
\begin{align*}
&A_1= \tfrac{1}{2 r} E_2,\\
&A_2=-i E_1-\tfrac{1}{2 r} E_2.
\end{align*}
It then follows from the choice of $E_1$ and $E_2$, together with the Lagrangian condition, that we may assume that $E_1=(\tfrac 1{\sqrt{2}},\tfrac 1{\sqrt{2}})$ and $E_2=(-\tfrac 1{\sqrt{2}},\tfrac 1{\sqrt{2}})$, which implies that
\begin{align*}
&A_1=(-\tfrac {\sqrt{2}}{4r},\tfrac {\sqrt{2}}{4r} ),\\
&A_2=(\tfrac {(1-2ir)}{2\sqrt{2}r},\tfrac {(-1-2ir)}{\sqrt{2}2r)}).
\end{align*}

\subsubsection{Summary} Combining the previous results, we get
\begin{theorem} Let $M$ be a proper lightlike isotropic Lagrangian, Lorentzian  surface of Type 2 in $\mathbb C^2_1$. Then $M$ is congruent with one of the following surfaces:
\begin{enumerate}
\item the surface
$$f(u,v)= (v A_1 +A_2) e^{iu},$$
where $A_1=(\tfrac {\sqrt{2}}2,\tfrac {\sqrt{2}}2)$ and $A_2=(-\tfrac {\sqrt{2}}2i,\tfrac {\sqrt{2}}2i)$,
\item the surface
$$f(u,v)=(A_1 e^{rv} +A_2 e^{-rv}) e^{iu} +(A_1 e^{rv} -A_2 e^{-rv}),$$
where $r$ is a positive constant, and $A_1=(-\tfrac {\sqrt{2}}{4r},\tfrac {\sqrt{2}}{4r})$ and $A_2=(\tfrac {(1-2ir)}{2\sqrt{2}r},\tfrac {(-1-2ir)}{\sqrt{2}2r)})$.
\end{enumerate}
\end{theorem}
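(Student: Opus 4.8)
\emph{Proof strategy.} Almost all of the work is already carried out in the two preceding subsubsections; what remains is to assemble it and to record the converse. The chain I would follow is this.

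By the lemmas and the corollary of this section, a proper Type 2 surface in $\C^2_1$ (so $c=0$) admits coordinates $(u,v)$ with $\partial_u = E_1$ and $\partial_v = E_2 - \lambda E_1$, in which $\lambda,\beta$ depend on $u$ alone and, after translating $u$ and replacing $r$ by $|r|$ if necessary, equal $\lambda = r\sin u$, $\beta = r\cos u$ with $r \ge 0$. Substituting the known $h(E_i,E_j)$, the connection coefficients (with $\alpha = 0$), and the parallelism of $J$ into $\widetilde\nabla_X Y = \nabla_X Y + h(X,Y)$ and $\widetilde\nabla_X(JY) = J\,\widetilde\nabla_X Y$ produces the displayed PDE system for $f$; its first equation $f_{uu} = i f_u$ decouples and gives $f(u,v) = g_1(v)e^{iu} + g_2(v)$ for $\C^2$-valued functions $g_1,g_2$.

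Inserting this ansatz into the remaining two equations and comparing powers of $e^{iu}$ reduces everything to $g_2' = r g_1$ together with $g_1'' = r^2 g_1$, which I solve in the two cases $r = 0$ and $r > 0$: in the first, $g_2$ is a constant that a translation of $\C^2_1$ removes, leaving $f = (vA_1 + A_2)e^{iu}$; in the second, $g_1 = A_1 e^{rv} + A_2 e^{-rv}$ and, after a translation, $g_2 = A_1 e^{rv} - A_2 e^{-rv}$, giving the second family. Evaluating $f_u,f_v$ at $p=(0,0)$---where $\lambda = 0$, so that $\{E_1,E_2\}$ is precisely the null frame of the construction---expresses $A_1,A_2$ through the frame vectors $E_1(0,0),E_2(0,0)$; since the holomorphic isometries $z \mapsto Uz + b$ of $\C^2_1$ ($U$ unitary for the index-one Hermitian form) act transitively on null frames at a point, the base frame may be normalised to a fixed one, which pins down $A_1$ and $A_2$ to the values in the statement.

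Finally I would check the converse: that each listed surface $f$ is indeed a proper lightlike isotropic Lagrangian, Lorentzian surface of Type 2. This amounts to recovering the (Lorentzian, null-frame-type) induced metric, identifying the null frame as the appropriate $e^{iu}$-twisted combinations of $f_u$ and $f_v$ with the $\lambda E_1$ correction built in, and computing $h$ to read off the normal form $h(E_1,E_1) = JE_1$, $h(E_1,E_2) = \lambda JE_1 + JE_2$, $h(E_2,E_2) = \lambda JE_2$; properness is automatic, since $h(E_1,E_1) = JE_1 \ne 0$ forbids minimal points. I expect the only real obstacle to be bookkeeping---keeping frame, complex structure, and translation normalisations mutually consistent across both cases---together with the minor point that the congruence is global, which holds because both parametrisations are entire in $(u,v)$ and the normalising map is a genuine isometry of $\C^2_1$, not merely a local one.
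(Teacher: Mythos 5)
Your proposal follows essentially the same route as the paper: the same reduction via the coordinate lemma and the explicit $\lambda=r\sin u$, $\beta=r\cos u$ to the PDE system $f_{uu}=if_u$, etc., the same ansatz $f=g_1(v)e^{iu}+g_2(v)$ collapsing to $g_2'=rg_1$ and $g_1''=r^2g_1$, the same case split on $r$, and the same normalisation of $A_1,A_2$ via the null frame at $(0,0)$ where $\lambda$ vanishes. The only additions (the explicit converse verification and the remark on transitivity of the isometry group on null frames) are sensible touches the paper leaves implicit, so the argument is correct and matches the paper's proof.
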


\subsection{Case $\mathbf{c=1}$: Lightlike isotropic Lagrangian, Lorentzian surfaces in $\mathbb CP_1^2(4)$}
We denote the horizontal lift of the immersion into $S^5_2(1)$ by $f$. It follows from the previous equations that $f$ is determined by the system of differential equations:
\begin{align*}
f_{uu}&=i f_u,\\
f_{uv}&= -(i+re^{-iu}) f_u +i f_v-f,\\
f_{vv}&= (i+r e^{-iu})(-2 f_u+f_v +2 f_u r \sin(u))-2(1-r \sin(u)) f.
\end{align*}

It follows from the first equation that there exists
vector valued functions $a_1$ and $a_2$ such that
$$f(u,v)=a_1(v) e^{iu} +a_2(v).$$
Substituting this into the second equation gives
$$a_2'(v)=-i a_2(v)+ r a_1(v).$$
The final equation now reduces to
\begin{equation*}
a_1''(v) =a_1(v) r^2 + i(a_1'(v)-a_2(v) r).
\end{equation*}
The solution of this differential equation depends on the value of $r$.

\subsubsection{Case $0 \le r <1$}
In this case we can write $r = \cos(t)$, where $t\in \, ]0,\tfrac{\pi}{2}]$. We find
that
\begin{align}
a_1(v)=& -\csc ^2(t) \left(-c_3 \sin (t) \sin (v \sin (t)) \right. \nonumber\\
      &+\left(c_1 \cos ^2(t)+i c_3\right) \cos (v \sin (t)) \nonumber\\
  &  \left. +2 i c_2 \cos (t) \sin ^2\left(\tfrac{1}{2}
   v \sin (t)\right)-c_1-i c_3\right)\label{case11}\\
a_2(v)=&-\csc ^2(t) \left(\cos (t) \left(-c_1 \sin (t) \sin (v \sin (t)) \right.\right. \nonumber\\
   &\left. +\left(c_3-i c_1\right) (\cos (v \sin (t))-1)\right)\nonumber\\
   & \left. -c_2 (\cos (v \sin (t))-i \sin (t) \sin (v \sin (t)))+c_2 \cos ^2(t)\right)
   \end{align}
We take again as initial point $p=(0,0)$. We have that $\lambda(0,0)=-1$.
It follows that
\begin{align*}
&f(0,0)=c_1+c_2,\\
&E_1(0,0)= \tfrac{\partial f}{\partial u}(0,0)= i c_1,\\
&E_2(0,0)= \tfrac{\partial f}{\partial v}(0,0)+\lambda  \tfrac{\partial f}{\partial u}(0,0)= -i (c_1+c_2) +c_3 +c_1 \cos t.
\end{align*}
So if we pick the initial conditions  $f(0,0)= (0,0,1)$, $E_1(0,0)=(\tfrac 1{\sqrt{2}},\tfrac 1{\sqrt{2}},0)$ and $E_2(0,0)=(-\tfrac 1{\sqrt{2}},\tfrac 1{\sqrt{2}},0)$,  we find that
\begin{align}
c_1&=\left(-\frac{i}{\sqrt{2}},-\frac{i}{\sqrt{2}},0\right),\\
c_2&=\left(\frac{i}{\sqrt{2}},\frac{i}{\sqrt{2}},1\right),\\
c_3&=\left(i \left(\frac{\cos (t)}{\sqrt{2}}+\frac{i}{\sqrt{2}}\right),i
   \left(\frac{\cos (t)}{\sqrt{2}}-\frac{i}{\sqrt{2}}\right),i\right). \label{case12}
\end{align}

\subsubsection{Case $r=1$}We obtain as solution of the differential equation that
\begin{align}
&a_1(v)=\frac{1}{2} \left(c_1 \left(v^2+2\right)+v \left(2 c_3-i
   \left(c_2-c_3\right) v\right)\right), \label{case21}\\
     &a_2(v)=\frac{1}{2} \left(2 c_2+v \left(c_3 v+\left(c_1-i c_2\right) (2-i
   v)\right)\right).
   \end{align}
We take again as initial point $p=(0,0)$. We have that $\lambda(0,0)=-1$.
It follows that
\begin{align*}
&f(0,0)=c_1+c_2,\\
&E_1(0,0)= \tfrac{\partial f}{\partial u}(0,0)= i c_1,\\
&E_2(0,0)= (1-i) c_1-i c_2 +c_3.
\end{align*}
So, if we pick the initial conditions  $f(0,0)= (0,0,1)$, $E_1(0,0)=(\tfrac 1{\sqrt{2}},\tfrac 1{\sqrt{2}},0)$ and $E_2=(-\tfrac 1{\sqrt{2}},\tfrac 1{\sqrt{2}},0)$,  we find that
\begin{align}
c_1&=\left(-\frac{i}{\sqrt{2}},-\frac{i}{\sqrt{2}},0\right),\\
c_2&=\left(\frac{i}{\sqrt{2}},\frac{i}{\sqrt{2}},1\right),\\
c_3&=\left(\frac{i-1}{\sqrt{2}},\frac{1+i)}{\sqrt{2}},i\right).\label{case22}
\end{align}

\subsubsection{Case $r >1$}
We obtain as solution of the differential equation that
\tiny
\begin{align}
&a_1(v)=\frac{c_3 \sqrt{r^2-1} \sinh \left(\sqrt{r^2-1} v\right)+\left(c_1 r^2-i
   c_2 r+i c_3\right) \cosh \left(\sqrt{r^2-1} v\right)+i c_2 r-c_1-i
   c_3}{r^2-1},\label{case31}\\
     &a_2(v)=\frac{\sqrt{r^2-1} \left(c_1 r-i c_2\right) \sinh \left(\sqrt{r^2-1}
   v\right)+\left(-c_2+\left(c_3-i c_1\right) r\right) \cosh
   \left(\sqrt{r^2-1} v\right)+r \left(c_2 r+i c_1-c_3\right)}{r^2-1}.
   \end{align} \normalsize
We take again as initial point $p=(0,0)$. We have that $\lambda(0,0)=-1$.
It follows that
\begin{align*}
&f(0,0)=c_1+c_2,\\
&E_1(0,0)= \tfrac{\partial f}{\partial u}(0,0)= i c_1,\\
&E_2(0,0)= -i (c_1+c_2) +c_3 +c_1 r.
\end{align*}
So, if we pick the initial conditions $f(0,0)= (0,0,1)$, $E_1(0,0)=(\tfrac 1{\sqrt{2}},\tfrac 1{\sqrt{2}},0)$ and $E_2=(-\tfrac 1{\sqrt{2}},\tfrac 1{\sqrt{2}},0)$,  we find that
\begin{align}
c_1&=\left(-\frac{i}{\sqrt{2}},-\frac{i}{\sqrt{2}},0\right),\\
c_2&=\left(\frac{i}{\sqrt{2}},\frac{i}{\sqrt{2}},1\right),\\
c_3&=\left(\frac{i(i+r)}{\sqrt{2}},\frac{(1+ir)}{\sqrt{2}},i\right).\label{case32}
\end{align}

\subsubsection{Summary} Combining the previous results we get the following theorem which finishes our classification.

\begin{theorem} Let $M$ be a proper lightlike isotropic Lagrangian, Lorentzian surface in $\mathbb CP^2_1(4)$. Then the Hopf lift of $M$ is congruent with  one of the following immersions into $S^5_1(1)$ given by $f(u,v)=a_1(v) e^{iu} +a_2(v)$, where either
\begin{enumerate}
\item $a_1,a_2,c_1,c_2,c_3$ are as described in \eqref{case11}--\eqref{case12}, or
\item $a_1,a_2,c_1,c_2,c_3$ are as described in \eqref{case21}--\eqref{case22}, or
\item $a_1,a_2,c_1,c_2,c_3$ are as described in \eqref{case31}--\eqref{case32}.
\end{enumerate}
\end{theorem}

\vskip 1cm

\end{document}